\providecommand{\bysame}{\leavevmode\hbox to3em{\hrulefill}\thinspace}
\providecommand{\MR}{\relax\ifhmode\unskip\space\fi MR }
\providecommand{\href}[2]{#2}
\newtheorem{thm}{Theorem}[section]
\newtheorem{cor}[thm]{Corollary}
\newtheorem{lem}[thm]{Lemma}
\newcommand{\hypergeom}[5]{\mbox{$
_{#1} F_{#2}
\!\!
\left(
\!\!\!\!
\begin{array}{c}
\multicolumn{1}{c}{\begin{array}{c}
#3
\end{array}}\\[1mm]
\multicolumn{1}{c}{\begin{array}{c}
#4
            \end{array}}\end{array}
\!\!\!\!
; \displaystyle{#5}\right)
$}}
\newcommand{\hypergeomq}[6]{\mbox{$
_{#1} \phi _{#2}
\!\!
\left(
\!\!\!\!
\begin{array}{c}
\multicolumn{1}{c}{\begin{array}{c}
#3
\end{array}}\\[1mm]
\multicolumn{1}{c}{\begin{array}{c}
#4
            \end{array}}\end{array}
\!\!\!\!
; \displaystyle{#5}
;\displaystyle{#6}\right)
$}}
\numberwithin{equation}{section}
\let\pn=\\
\def\finpr{\hfill \hbox{
\vrule height 1.453ex  width 0.093ex  depth 0ex \vrule height
1.5ex  width 1.3ex  depth -1.407ex\kern-0.1ex \vrule height
1.453ex  width 0.093ex  depth 0ex\kern-1.35ex \vrule height
0.093ex  width 1.3ex  depth 0ex}}
\def\itm#1#2{\hbox{\hbox to 5mm{\it (#1)}} \hbox {\vtop{\hsize=144mm#2\vfill\ms}\hfill}}
\def\bib#1#2{\hbox{\hbox to 10mm{[#1] :\hfill} \hfill \hbox to 144mm{\vtop{\hsize=144mm#2\vfill\ms}\hfill}}}
\long\def\tx#1#2{\hbox{\hbox to
94mm{\vtop{\hsize=90mm#1\vfill}\hfill} \hfill\hbox to
74mm{\vtop{\hsize=70mm#2\vfill}\hfill}}\filbreak}
\title{On inversion and connection coefficients for basic hypergeometric polynomials.}
 \author{
{Hamza Chaggara \footnote{The corresponding author: hamza.chaggara@ipeim.rnu.tn} and Mohamed Mabrouk}\\\\
{\small \'{E}cole Sup\'{e}rieure des Sciences et de Technologie, Sousse University, TUNISIA.}
}
\date{}
\begin{document}\thispagestyle{empty}
 \maketitle
 \hrule
\begin{abstract}
 {\footnotesize  In this paper, we propose a general method to express explicitly the inversion and the connection coefficients between two basic hypergeometric polynomial sets. As application, we consider some $d$-orthogonal
basic hypergeometric polynomials and we derive
expansion formulae corresponding to all the families within the $q$-Askey scheme.}
  \\
 {\bf Key words.}
  {\footnotesize
Connection coefficients, Inversion coefficients, Basic hypergeometric
polynomials, $d$-orthogonal basic polynomials, $q$-Askey scheme .}
\\ {\bf 2000 Mathematics Subject Classification.} 33C45, 41A10,
41A58.
\end{abstract}
\hrule
 \section{Introduction} Let ${\cal P}$ be the vector
space of polynomials with coefficients in $\mathbb{C}$. A
polynomial sequence $\{P_n\}_{n\geq0}$ in $\cal P$ is
called a {\it polynomial set} if and only if $deg P_n=n$. \\
 Given two polynomial sets $\{P_n\}_{n\geq0}$ and
$\{Q_n\}_{n\geq0}$, the so-called {\it connection problem} between
them asks to find the coefficients $C_m(n)$ in the expression:
\begin{eqnarray}
\label{connection}
 Q_n(x)=\mathop\sum_{m=0}^{n}C_m(n)P_m(x).
\end{eqnarray}
For the particular case $Q_n(x)=x^n$ the connection problem
(\ref{connection}) is called { \it inversion problem} associated
to $ \{P_n(x)\}_{n\geq 0}$.
 For discrete polynomials and basic polynomials, besides the natural basis $\{x^n\}_n$, some other basis can be considered, namely, the Pochhammer basis $\{(x)_n\}_n$, the $q$-shifted factorial basis $\{(x;q)_n\}_n$ or products involving them :
 $$(x)_n=\left\{\begin{array}{lll}
               \prod_{k=0}^{n-1}(x+k) & \textrm{if} & n=1,2,3,\ldots \\
               1 & \textrm{if} & n=0
             \end{array}\right.
              \ \textrm{and}\quad (x;q)_n=\left\{\begin{array}{lll}
               \prod_{k=0}^{n-1}(1-xq^k) & \textrm{if} & n=1,2,3,\ldots \\
               1 & \textrm{if} & n=0
             \end{array}\right.$$
The problem of connecting orthogonal polynomials is of old and recent interest.
The connection
coefficients play an important role in many problems in pure and
applied mathematics especially in combinatorial analysis or in mathematical physics. In fact, some
inversion problems have been solved as one of the steps
 leading to the orthogonality for the corresponding polynomial sets. Moreover
  the use of inversion in order to solve connection problems was considered by Rainville~\cite{Rain}
 (Hermite, Laguerre and Legendre polynomials), by Gasper~\cite{gasper} (classical discrete orthogonal
  polynomials) and then by Area et al. \cite{navima2001} (polynomials within the Askey scheme and its $q$-analogue) and by Foupouagnigni et al. \cite{mama2013,mama2015} (classical continuous, classical discrete and $q$-classical orthogonal polynomials).\\
The connection problem of basic polynomials have been studied by
many authors. A wide variety of methods, based
on specific properties of the involved polynomials, have been devised for computing the connection coefficients.
For classical orthogonal polynomials the connection problem can be recurrently solved using an algorithm (the \textit{Navima}-algorithm) which generates in a systematic way a linear recurrence relation in $m$ for $C_m(n)$.
(See, for
instance,~\cite{navima99,navima2015} and the reference therein). An algorithmic approach to
build and solve recurrence relations for connection coefficients associated to $q$-classical orthogonal polynomials was also given by Lewanowicz \cite{lewanowicz2013} and by Foupouagnigni et al. \cite{mama2012}.\\
An expansion of basic hypergeometric functions in basic hypergeometric functions called Verma formula \cite{verma2} was used by S\'{a}nchez-Ruiz and Dehesa \cite{dehesa2001} and by Area et al. \cite{navima2001} to connect hypergeometric and basic hypergeometric polynomials. A
general method based in lowering operators, dual sequences and generating functions was developed by Ben Cheikh and Chaggara to generate and compute the inversion and connection coefficients for polynomial sets with Boas-Buck generating functions \cite{chaggara2005,chaggara2006a}. The same approach was implemented to \textit{Maple} system  to solve particular connection problem for continuous and discrete classical polynomials \cite{chaggara2007b}.\\
Our aim in this paper is to propose a simple and general method which allows us to compute the inversion and connection coefficients for basic hypergeometric polynomials. The approach we shall propose in this paper does not need particular properties of the polynomials involved in the problem.\\
Consider $\{B_n\}_{n\geq 0}$ a suitable basis of polynomials. To find the coefficients $C_m(n)$ in (\ref{connection}),
we combine the inversion relation
\begin{equation}\label{inversion}
B_n(x)=\mathop\sum_{m=0}^nI_m(n)Q_m(x),
\end{equation}
with the explicit expression
\begin{equation}\label{direct}
 P_n(x)=\mathop\sum_{m=0}^nD_m(n)B_m(x),
\end{equation}
which yields, with sum manipulation, to the representation
\begin{eqnarray}\label{formule connection}
C_m(n)=\mathop\sum_{j=0}^{n-m}D_{j+m}(n)I_m(j+m)=\mathop\sum_{k=m}^{n}D_{k}(n)I_m(k).
\end{eqnarray}
The connection coefficient in (\ref{direct}) can be obtained directly from the  hypergeometric or the basic hypergeometric representation of $P_n$ however the inversion coefficient will be evaluated recurrently using, for this purpose, a general result recently stated by Ben Romdhane \cite{neila2015a}.  \\
We derive the inversion and connection coefficients for the following two classes of basic hypergeometric polynomials:
\begin{equation}\label{basic2}
\hypergeomq{r+1}{s}{q^{-n},(a_r)}{(b_s)}{q}{qx}.
\end{equation}
and
\begin{equation}\label{basic1}
\hypergeomq{r+2}{s}{q^{-n},aq^n,(a_r)}{(b_s)}{q}{qx}.
\end{equation}
The $_r\phi_s$ denotes the basic hypergeometric series or $q$-hypergeometric series defined by
\begin{equation}\label{hypergeom}
\hypergeomq{r}{s}{(a_r)}{(b_s)}{q}{z}=
\sum_{n=0}^\infty{[a_r;q]_n\over [b_s;q]_n}\left((-1)^nq^{{n(n-1)\over
2}}\right)^{1+s-r}{z^n\over (q;q)_n},
\end{equation}
where
$$[a_n;q]_k=(a_1;q)_k(a_2;q)_k\cdots(a_n;q)_k,\quad k=0,1,2,\ldots.$$
The base $q$ will be restricted to $|q| < 1$ for non-terminating series.\\
The contracted notation $(a_r)$ is used to
abbreviate the array of $r$ parameters $a_1,\cdots a_r$,\ldots.\\
The parameters $a, (a_r)$ and $(b_s)$ are assumed to be independent of $n$.\\
The closed analytical formulae for the corresponding inversion and connection coefficients will be expressed by means of terminating basic hypergeometric functions which, in some cases, can be evaluated as a basic hypergeometric terms.\\
By applying appropriate limit to the obtained results we derive connection coefficients for the following hypergeometric polynomials:
\begin{eqnarray}
\label{braf}
\ _{r+1}F_s\left(\begin{array}{l}-n,\ (a_r)\\
(b_s)\end{array};x\right),
\end{eqnarray}
\begin{eqnarray}
\label{srpa}
 \ _{r+2}F_{s}\left(\begin{array}{l}-n,\lambda+n,(a_r)\\
(b_s)\end{array};x\right),
 \end{eqnarray}
 where the  $_rF_s$ denotes, as usual, the
  generalized hypergeometric
 functions with $r$ numerator and $s$ denominator
 parameters. \\
 The polynomials defined by (\ref{basic2})-(\ref{srpa}) are relevant to the study
   of quantum-mechanical systems and include as particular cases many known polynomial sets,
 we quote for instance orthogonal polynomials in Askey scheme and its $q$-analogue and their generalizations with
     Sobolev type orthogonality and $d$-orthogonality.\\
The families (\ref{basic1}) and (\ref{hypergeom}) extend the hypergeometric polynomials given by (\ref{braf}) and (\ref{srpa}). It is clear that
$$\lim_{q\rightarrow 1^-}\frac{(q^a;q)_k}{(1-q)^k}=(a)_k,$$
hence
\begin{eqnarray}\label{lim qhyp hyp}
\lim_{q\mapsto1^-}\hypergeomq{r}{s}{q^{a_1},\ldots,q^{a_r}}{q^{b_1},\ldots,q^{b_s}}{q}{(1-q)^{s+1-r}x}=\hypergeom{r}{s}{(a_r)}{(b_s)}{x}.
\end{eqnarray}
We apply the obtained results to some $d$-orthogonal basic hypergeometric polynomials (Big $q$-Laguerre type, Little $q$-Laguerre type, $q$-Laguerre type and $q$-Meixner type) as well as to the orthogonal basic hypergeometric polynomials of the $q$-analogue of Askey scheme (Askey-Wilson, $q$-Racah, Continuous dual $q$-Hahn, Continuous $q$-Hahn, Big $q$-Jacobi,... ).\\
The content of this paper is organized as follows:
\begin{enumerate}
\item In Section~\ref{connectionbasic}, we prove our main
result~Theorem~\ref{thmmain} as well as some useful consequences.
\item In
Section~\ref{connectiondortho}, we apply our results to many
generalized
 basic hypergeometric polynomial sets studied in the framework of $d$-orthogonality.
\item In Section \ref{connectionqpoly}, the explicit inversion and connection
coefficients between orthogonal polynomials of the $q$-Askey scheme are summarized in Tables~\ref{inversionq-askey}-\ref{connectionq-askey}.
\end{enumerate}
\section{Connection coefficients between basic Hypergeometric polynomials}
\label{connectionbasic}
\begin{thm}\label{thmmain}
The inversion and connection formulae for basic hypergeometric polynomials (\ref{basic1}) are given by \\
\begin{equation}\label{inversiongen}
x^n=\frac{[b_r;q]_n}{[a_s;q]_n}\left((-1)^nq^{\frac{n(n-1)}{2}}\right)^{r+1-s}\mathop\sum_{m=0}^n
{n\brack m}_q\frac{(-1)^mq^{\frac{m(m-1)}{2}}}{(aq^m;q)_m(aq^{2m+1};q)_{n-m}}\hypergeomq{r+2}{s}{q^{-m},aq^m,(a_r)}{(b_s)}{q}{qx},
\end{equation}
and
\begin{eqnarray}\label{connectiongen}
  \hypergeomq{r+2}{s}{q^{-n},aq^n,(a_r)}{(b_s)}{q}{qx} &=& \mathop\sum_{m=0}^n{n\brack m}_q(-1)^{m(s+l-r-h)}
\frac{(aq^n;q)_m}{(cq^m,q)_m}\frac{[a_r;q]_m}{[b_s;q]_m}
\frac{[d_h;q]_m}{[c_{l};q]_m} \nonumber\\
   &\times& \hypergeomq{r+h+2}{s+l+1}{q^{m-n},aq^{m+n},(a_rq^m),(d_hq^m)}{cq^{2m+1},(b_sq^m),(c_{l}q^m)}{q}{q^{1+m(s+l-r-h)}}\nonumber \\
   &\times& q^{\frac{m(m-1)}{2}(s+l-r-h)}q^{m(m-n)}\hypergeomq{l+2}{h}{q^{-m},cq^m,(c_l)}{(d_h)}{q}{qx},
\end{eqnarray}
where, the $q$-binomial coefficient ${n\brack k}_q$ is defined by ${n\brack k}_q={(q;q)_n\over (q;q)_k(q;q)_{n-k}}.$
\end{thm}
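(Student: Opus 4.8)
The plan is to derive both formulae from the single composition identity (\ref{formule connection}), taking as the intermediate basis the monomials $B_k(x)=x^k$. With this choice the coefficients $D_m(n)$ of (\ref{direct}) are read off directly from the defining series (\ref{hypergeom}) of the polynomials (\ref{basic1}): expanding
$$\hypergeomq{r+2}{s}{q^{-n},aq^n,(a_r)}{(b_s)}{q}{qx}=\sum_{k=0}^{n}\frac{(q^{-n};q)_k(aq^n;q)_k[a_r;q]_k}{[b_s;q]_k(q;q)_k}\Bigl((-1)^kq^{k(k-1)/2}\Bigr)^{s-r-1}(qx)^k$$
identifies $D_m(n)$ as the $x^m$-coefficient. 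The whole difficulty is then concentrated in the inversion coefficients $I_m(n)$, which I would treat first; once they are known, the connection coefficients follow by a single summation.

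For the inversion formula (\ref{inversiongen}) I would argue as follows. The array $D_m(n)$ is lower triangular and $I_m(n)$ are, by definition, the entries of its inverse. I would apply the general inversion result of Ben Romdhane \cite{neila2015a} to this array: the $n$-dependence of $D_m(n)$ enters only through the factors $(q^{-n};q)_m$ and $(aq^n;q)_m$, the remaining factor $[a_r;q]_m/[b_s;q]_m$ together with the sign and power factors acting merely as a diagonal rescaling that can be pulled out. Thus the inversion reduces to that of a kernel $(q^{-n};q)_m(aq^n;q)_m$ of $q$-Racah type. Equivalently, one verifies the claimed $I_m(n)$ directly by checking the biorthogonality relation $\sum_{m=j}^{n}D_m(n)I_j(m)=\delta_{j,n}$, which, after cancelling the diagonal factors and shifting $m=j+p$, collapses to a terminating balanced series summable in closed form by a $q$-Chu--Vandermonde (resp. $q$-Saalsch\"utz) summation. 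The extra parameter $aq^n$ is precisely what produces the denominator factor $(aq^m;q)_m(aq^{2m+1};q)_{n-m}$.

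For the connection formula (\ref{connectiongen}) I would substitute into (\ref{formule connection}) the coefficients $D_k(n)$ of the first family and the inversion coefficients $I_m(k)$ of the second family, i.e. (\ref{inversiongen}) with $(a,(a_r),(b_s),r,s)$ replaced by $(c,(c_l),(d_h),l,h)$. Writing $C_m(n)=\sum_{j=0}^{n-m}D_{m+j}(n)\,I_m(m+j)$ and applying the splitting identities $(\alpha;q)_{m+j}=(\alpha;q)_m(\alpha q^m;q)_j$, ${m+j\brack m}_q=(q;q)_{m+j}/\bigl((q;q)_m(q;q)_j\bigr)$ and $\binom{m+j}{2}=\binom m2+\binom j2+mj$, the $j=0$ part yields the prefactor displayed in (\ref{connectiongen}), while the sum over $j$ becomes a terminating series whose numerator parameters $q^{m-n},aq^{m+n},(a_rq^m),(d_hq^m)$ and denominator parameters $cq^{2m+1},(b_sq^m),(c_lq^m)$ are exactly those of the stated $_{r+h+2}\phi_{s+l+1}$; the residual sign and residual power of $q$ collapse into the argument $q^{1+m(s+l-r-h)}$ and the overall factors $(-1)^{m(s+l-r-h)}q^{\binom m2(s+l-r-h)}q^{m(m-n)}$.

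The main obstacle is the bookkeeping of the quadratic $q$-powers and the signs under the shift $k=m+j$. One must track how the built-in factor $\bigl((-1)^kq^{\binom k2}\bigr)^{s-r-1}$ from $D_k(n)$, the factor $\bigl((-1)^kq^{\binom k2}\bigr)^{l+1-h}$ from $I_m(k)$, and the explicit $q^k$ redistribute, through the cross term $mj$, among (i) the $m$-dependent prefactor, (ii) the internal normalisation $\bigl((-1)^jq^{\binom j2}\bigr)^{s+l-r-h}$ that the definition (\ref{hypergeom}) automatically attaches to each term of a $_{r+h+2}\phi_{s+l+1}$, and (iii) the series argument. Making this distribution consistent is exactly what forces the precise exponent $1+m(s+l-r-h)$ and the precise prefactor powers in (\ref{connectiongen}); everything else is routine cancellation of $q$-shifted factorials.
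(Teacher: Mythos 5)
Your proposal is correct, and its two halves compare differently with the paper. The connection half is essentially the paper's own argument: the paper also obtains (\ref{connectiongen}) by inserting the series coefficients $D_k(n)$ of the first family and the inversion coefficients $I_m(k)$ of the second family into (\ref{formule connection}), shifting $k=m+j$, splitting the $q$-shifted factorials and the quadratic exponents via $\binom{m+j}{2}=\binom{m}{2}+\binom{j}{2}+mj$, exactly the bookkeeping you describe. The inversion half, however, is a genuinely different route. The paper proves (\ref{inversiongen}) through Ben Romdhane's lemma: it renormalizes the polynomial to be monic, computes $b_1(n,k)$ and $b_2(n,k)$ from the recurrence (\ref{recurrence}), guesses the closed form of $b_m(n,k)$ for all $k$, proves it by induction, and sets $k=0$. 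You instead verify that the claimed array is the inverse of $D$ by checking $\sum_{m=j}^{n}D_m(n)I_j(m)=\delta_{j,n}$: after cancelling $[a_r;q]_m/[b_s;q]_m$ against its reciprocal and $\bigl((-1)^mq^{\binom m2}\bigr)^{s-r-1}$ against $\bigl((-1)^mq^{\binom m2}\bigr)^{r+1-s}$, the shift $m=j+p$ reduces the sum to
\begin{equation*}
\hypergeomq{2}{1}{q^{j-n},\ aq^{n+j}}{aq^{2j+1}}{q}{q}
=\frac{(q^{j+1-n};q)_{n-j}}{(aq^{2j+1};q)_{n-j}}\,\bigl(aq^{n+j}\bigr)^{n-j},
\end{equation*}
which vanishes for $j<n$ because of the factor $(q^{j+1-n};q)_{n-j}$, while the $j=n$ term equals $1$; since the matrices are lower triangular with nonzero diagonal, this one-sided identity suffices. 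Your route is cleaner and self-contained (no recurrence machinery, no induction on the doubly indexed $b_m(n,k)$), at the cost of being a verification that presupposes the candidate $I_m(n)$ — though the paper's derivation also rests on guessing a pattern from $b_1,b_2$ before inducting, so little is lost. Two small corrections: the summation needed is $q$-Chu--Vandermonde, not $q$-Saalsch\"utz (the inner series is a terminating ${}_2\phi_1$ with argument $q$, not a balanced ${}_3\phi_2$); and your first suggestion, reducing to a ``$q$-Racah kernel'' by pulling out a diagonal rescaling from Ben Romdhane's result, is too vague to stand alone, but your biorthogonality check makes it unnecessary.
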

Starting point for the proof of this theorem is the following result.
 \begin{lem}\cite[Theorem 2.1]{neila2015a}
 Let $\{P_n\}_{n\geq0}$ be a monic polynomial set expanded in a given basis $\{B_n\}_{n\geq0}$ by
 \begin{equation}
 P_n(x)=\displaystyle\sum_{k=0}^n A_k(n)B_{n-k}(x).
\end{equation}
Then the following inversion formula holds
\begin{equation}\label{inversionlem}
B_n(x)=\displaystyle\sum_{m=0}^n b_m(n,0)P_{n-m}(x),
\end{equation}
where
\begin{equation}\label{recurrence}
\left\{\begin{array}{lll}
         b_0(n,0)=1, &  &  \\
         b_0(n,k)=0 & \textrm{if} & 1\leq k, \\
         b_{m+1}(n,k)=b_m(n,k+1)-b_m(n,0)A(n-m,k+1) & \textrm{if} & 0\leq m\leq n-1\;,0\leq k\leq n-m-1.
       \end{array}
\right.
\end{equation}
 \end{lem}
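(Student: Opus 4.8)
The plan is to establish the inversion formula~\eqref{inversionlem} by induction on the number $m$ of $P$-terms that have been extracted, the crux being to first uncover the combinatorial meaning of the two-index auxiliary quantities $b_m(n,k)$. With $n$ fixed, for each $0\le m\le n$ I would set up the intermediate identity
\begin{equation}\label{invariant}
B_n(x)=\sum_{j=0}^{m-1}b_j(n,0)\,P_{n-j}(x)+\sum_{k=0}^{n-m}b_m(n,k)\,B_{n-m-k}(x),
\end{equation}
which I shall call $(\star_m)$: after $m$ steps the expansion of $B_n$ has been split into a part already written over $\{P_n\}$ and a remainder still written over $\{B_n\}$, with $b_m(n,k)$ being precisely the coefficient of $B_{n-m-k}$ in that remainder. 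The asserted formula~\eqref{inversionlem} is then exactly $(\star_n)$, once one notes that $B_0=P_0$.

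The base case $(\star_0)$ is immediate: the first sum is empty and, since $b_0(n,0)=1$ while $b_0(n,k)=0$ for $k\ge1$, the second sum collapses to $B_n(x)$. For the inductive step I would isolate the leading remainder term $b_m(n,0)\,B_{n-m}(x)$ of $(\star_m)$ and eliminate $B_{n-m}$: because the expansion is unitriangular --- the diagonal coefficient being $A_0(n)=1$, which is the content of the monicity hypothesis --- the defining relation $P_{n-m}=\sum_{i=0}^{n-m}A_i(n-m)\,B_{n-m-i}$ solves for its top term as
\begin{equation}\label{peel}
B_{n-m}(x)=P_{n-m}(x)-\sum_{i=1}^{n-m}A_i(n-m)\,B_{n-m-i}(x).
\end{equation}
Substituting~\eqref{peel} into $(\star_m)$ promotes $b_m(n,0)\,P_{n-m}$ into the $P$-sum, lengthening it to its $(m+1)$-term truncation, and leaves a fresh $B$-remainder in which the coefficient of $B_{n-m-\ell}$ (for $\ell\ge1$) is $b_m(n,\ell)-b_m(n,0)\,A_\ell(n-m)$.

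Matching this against the remainder prescribed by $(\star_{m+1})$ --- whose coefficient of $B_{n-m-\ell}$ is $b_{m+1}(n,\ell-1)$ after the shift $k=\ell-1$ --- forces precisely $b_{m+1}(n,k)=b_m(n,k+1)-b_m(n,0)\,A_{k+1}(n-m)$ on the stated ranges $0\le m\le n-1$, $0\le k\le n-m-1$, which is the recurrence in the statement (there $A_{k+1}(n-m)$ appears as $A(n-m,k+1)$), and the induction closes. To conclude I would evaluate $(\star_n)$: its remainder shrinks to the single term $b_n(n,0)\,B_0(x)$, and since $A_0(0)=1$ gives $P_0=B_0$ this term equals $b_n(n,0)\,P_0(x)$ and merges into the $P$-sum, producing~\eqref{inversionlem} with coefficients $b_m(n,0)$. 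The only genuine obstacle is the very first step, namely guessing the invariant $(\star_m)$ --- recognizing that the auxiliary index $k$ records the position of $B_{n-m-k}$ within the not-yet-converted $B$-tail; everything after that is a disciplined bookkeeping of coefficients, and the normalization $A_0\equiv1$ is exactly what keeps~\eqref{peel}, and hence the recurrence, free of any division.
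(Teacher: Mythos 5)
Your proof is correct, but be aware that the paper contains no proof of this lemma to compare against: the statement is imported verbatim from Ben Romdhane \cite{neila2015a}, and the \emph{proof} environment that follows it in the source is the proof of the authors' Theorem~\ref{thmmain}, which uses the recurrence (\ref{recurrence}) purely as a computational black box (guessing a closed form for $b_m(n,k)$ in the basic hypergeometric case and verifying it by induction). Measured against that, your argument is a genuine, self-contained derivation of the quoted result, and it is the natural one: the loop invariant $(\star_m)$, expressing $B_n$ as an $m$-term $P$-expansion plus a $B$-remainder whose coefficients are exactly the $b_m(n,k)$, the peeling step that solves the unitriangular relation $P_{n-m}=\sum_{i}A_i(n-m)B_{n-m-i}$ for its top term, and the coefficient matching (legitimate because $\{B_j\}_{j\geq0}$ is a basis, so the remainder coefficients are uniquely determined) close the induction cleanly, with $(\star_n)$ and $P_0=B_0$ yielding (\ref{inversionlem}). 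This is almost certainly the mechanism behind the original theorem, and your identification of $k$ as the offset within the unconverted $B$-tail is the right structural insight.

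One point should be sharpened. Monicity of $P_n$ alone does not give $A_0(n)=1$: one also needs the basis polynomials $B_n$ to be monic (or, equivalently, the leading coefficients of $P_n$ and $B_n$ to agree). The lemma as stated implicitly assumes this normalization --- witness $b_0(n,0)=1$ and the fact that the recurrence (\ref{recurrence}) never involves $A_0$ --- and indeed several bases used later in the paper, such as $(x;q)_n$ or the Askey--Wilson basis $(ae^{i\theta},ae^{-i\theta};q)_n$, are \emph{not} monic, which is precisely why the authors pass to the monic normalization $\tilde{P}_n$ before invoking the lemma. Your attribution of $A_0(n)=1$ to ``the content of the monicity hypothesis'' should be qualified to ``monicity of $P_n$ relative to a monic (or suitably normalized) basis''; with that emendation the proof stands as written.
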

\begin{proof}
In order to get the explicit expression of the inversion coefficient $b_m(n, 0)$ in (\ref{inversionlem}), we need to compute the coefficients $b_m(n, k)$ for each
$m$ and $k$. For this, we use the recurrence relation (\ref{recurrence}) to compute some initial
 values and to guess the resulting term. Then we proceed by induction to give the proof.\\
Let $\tilde{P}_n(x)$ be the monic basic hypergeometric polynomial defined by
\begin{eqnarray*}
\tilde{P}_n(x)=\frac{(-1)^n[b_s;q]_n}{(aq^n;q)_n[a_r;q]_n}\left[(-1)^nq^{\frac{n(n-1)}{2}}\right]^{r+1-s}q^{-\frac{n(n-1)}{2}}
\hypergeomq{r+2}{s}{q^{-n},aq^n,(a_r)}{(b_s)}{q}{qx},
\end{eqnarray*}
which by some elementary transformations can also be written as
\begin{equation*}
\tilde{P}_n(x)=\displaystyle\sum_{k=0}^n(-1)^k{n\brack k}_q\left[(-1)^kq^{-\frac{k(2n-k-1)}{2}}\right]^{s-r-1}
q^{\frac{k(k-1)}{2}}\frac{(aq^{n-k};q)_{n-k}}{(aq^n;q)_n}\frac{[b_sq^{n-k};q]_k}{[a_rq^{n-k};q]_k} x^{n-k}.
\end{equation*}
It follows
 \begin{eqnarray*}
 b_1(n,k)&=&-A(n,k+1)\\
 &=&(-1)^k{n\brack k+1}_q\left[(-1)^{k+1}q^{-\frac{(k+1)(2n-k-2)}{2}}\right]^{s-r-1}q^{\frac{k(k+1)}{2}}\\
&\times& \frac{(aq^{n-k-1};q)_{n-k-1}}{(aq^n;q)_n}\frac{[b_sq^{n-k-1};q]_{k+1}}{[a_rq^{n-k-1};q]_{k+1}}.
\end{eqnarray*}
For $b_2(n,k)$, we have
\begin{eqnarray*}
b_1(n,0)A(n-1,k+1)&=&(-1)^{k+1}\left[\begin{array}{c}n\\k+2\end{array}\right]_q\frac{(q^{k+2};q)_1}{(q;q)_1}\left[(-1)^kq^{-\frac{(k+2)(2n-k-3)}{2}}\right]^{s-r-1}
q^{\frac{k(k+1)}{2}}\\
&\times&\frac{[b_sq^{n-k-2};q]_{k+2}}{[a_rq^{n-k-2};q]_{k+2}}\frac{(aq^{n-k-2};q)_{n-k-2}}{(aq^n;q)_n}.
\end{eqnarray*}
By the useful identities,
$${n\brack m}_q{n-m\brack k}_q=
{n\brack k+m}_q\frac{(q^{k+1};q)_m}{(q;q)_m},\ (aq;q)_n=(a;q)_n\frac{1-aq^n}{1-a}\ \textrm{and}\ (a;q)_{n+m}=(a;q)_m(aq^m;q)_{n-m},$$
we find
\begin{eqnarray*}
b_2(n,k)&=&(-1)^k\left[\begin{array}{c}n\\k+2\end{array}\right]_q
\left[(-1)^{k+2}q^{-\frac{(k+2)(2n-k-3)}{2}}\right]^{s-r-1}q^{\frac{k(k+1)}{2}}\\
&\times&\frac{(aq^{n-k-2};q)_{n-k-2}}{(aq^n;q)_n}\frac{[b_sq^{n-k-2};q]_{k+2}}{[a_rq^{n-k-2};q]_{k+2}}
\frac{(q^{k+1};q)_{1}}{(q;q)_{1}}.
\end{eqnarray*}
Now, we can suggest the following form of $b_m(n,k)$:
\begin{eqnarray*}
b_m(n,k)&=&(-1)^k{n\brack k+m}_q
\left[(-1)^{k+m}q^{-\frac{(k+m)(2n-k-m-1)}{2}}\right]^{s-r-1}
q^{\frac{k(k+1)}{2}}\nonumber\\
&\times&\frac{(aq^{n-k-m};q)_{n-k-m}}{(aq^n;q)_n}\frac{[b_sq^{n-k-m};q]_{k+m}}
{[a_rq^{n-k-m};q]_{k+m}}\frac{(q^{k+1};q)_{m-1}}{(q;q)_{m-1}}.
\end{eqnarray*}
Assuming the formula to hold for $m$. Substituting $b_m(n,0), b_m(n,k+1)$ and $A(n-m,k+1)$ with their expressions in $b_{m+1}(n,k)$ given by (\ref{recurrence}), it follows that the assumption is valid for $m+1$. Thus, up taking $k = 0$, we obtain:
\begin{equation*}\label{inv}
b_m(n,0)={n\brack m}_q\left[(-1)^mq^{-\frac{m(2n-m-1)}{2}}\right]^{s-r-1}
\frac{(aq^{n-m};q)_{n-m}}{(aq^n;q)_n} \frac{[b_sq^{n-m};q]_m}{[a_rq^{n-m};q]_m}.
\end{equation*}
That leads to (\ref{inversiongen}).\\
According to the basic hypergeometric representation given by (\ref{basic1}), to the associated inversion formula (\ref{inversiongen}) and to the composition formula (\ref{formule connection}) and by the relation $$(q^{-n};q)_m=\frac{(-1)^m(q;q)_n}{(q;q)_{n-m}}q^{\frac{m(m-1)}{2}}q^{-nm}$$
we get
\begin{eqnarray*}
C_m(n)&=&(-1)^m\frac{(q^{-n};q)_m(aq^n;q)_m[a_r;q]_m}{[b_s;q]_m}q^m
\frac{[d_h;q]_m}{[c_{l};q]_m(cq^m;q)_m}q^{\frac{m(m-1)}{2}}\\
&\times&\mathop\sum_{j=0}^{n-m}{
j+m\brack m}_q\frac{(q^{m-n};q)_j(aq^{n+m};q)_j[a_rq^m;q]_j}{(q;q)_{m+j}[b_sq^m;q]_j}\\
&\times& \frac{[d_hq^m;q]_j}{(cq^{2m+1};q)_j[c_{l}q^m;q]_j}q^j \left[(-1)^{m+j}q^{\frac{(m+j)(m+j-1)}{2}}\right]^{s-r+l-h}\\
&=&q^{m(m-n)}
\frac{(q;q)_n}{(q;q)_{n-m}(q;q)_m}
\frac{(aq^n;q)_m[a_r;q]_m[d_h;q]_m}{[b_s;q]_m[c_{l};q]_m(cq^m;q)_m}\\
&\times&\mathop\sum_{j=0}^{n-m}\frac{(q^{m-n};q)_j(aq^{m+n};q)_j[a_rq^m;q]_j[d_hq^m;q]_j}
{(cq^{2m+1};q)_j[b_sq^m;q]_j[c_{l}q^m;q]_j(q;q)_j}\\
&\times&(-1)^{m(s+l-r-h)}\left[q^{\frac{m(m-1)}{2}}\right]^{s+l-r-h}
\left[(-1)^{j}q^{\frac{j(j-1)}{2}}\right]^{s+l-r-h}q^j\left(q^{mj}\right)^{s+l-r-h}.
\end{eqnarray*}
Then (\ref{connectiongen}) follows.
\end{proof}
If we put $a=c=0$ in (\ref{inversiongen}) and (\ref{connectiongen}) we obtain the inversion and connection formula for basic hypergeometric polynomials (\ref{basic2}).
\begin{cor}
The inversion and connection formulae associated to (\ref{basic2}) are given by:
\begin{equation}\label{inversionbasic1}
x^n=\frac{[b_s;q]_n}{[a_r;q]_n}\left((-1)^nq^{\frac{n(n-1)}{2}}\right)^{r-s}\mathop\sum_{k=0}^n(-1)^k
{n\brack k}_qq^{\frac{k(k-1)}{2}}\hypergeomq{r+1}{s}{q^{-k},(a_r)}{(b_s)}{q}{qx},
\end{equation}
and
\begin{eqnarray}\label{connectionbasic1}
  \hypergeomq{r+1}{s}{q^{-n},(a_r)}{(b_s)}{q}{qx} &=& \mathop\sum_{m=0}^n{n\brack m}_q
(-1)^{m(s+l-r-h)}q^{\frac{m(m-1)}{2}(s+l-r-1-h)}q^{m(m-n)} \nonumber\\
   &\times& \frac{[a_{r};q]_m}{[b_s;q]_m}\frac{[d_h;q]_m}{[c_{l};q]_m}
\hypergeomq{r+h+1}{s+l}{q^{m-n},(a_rq^m),(d_hq^m)}{(b_sq^m),(c_{l}q^m)}{q}{q^{1+m(s+l-r-h)}}\nonumber \\
   &\times& \hypergeomq{l+1}{h}{q^{-m},(c_l)}{(d_h)}{q}{qx}.
\end{eqnarray}
\end{cor}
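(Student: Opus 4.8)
The plan is to derive the Corollary from Theorem~\ref{thmmain} by setting $a=c=0$, the one genuinely delicate point being that, because of the normalization $\left((-1)^kq^{k(k-1)/2}\right)^{1+s-r}$ built into~(\ref{hypergeom}), the limit $a\to0$ of~(\ref{basic1}) is \emph{not} literally the family~(\ref{basic2}): when the numerator entry $aq^n$ is deleted the exponent $1+s-r$ increases by one. To keep track of this I would introduce the invertible linear operator $T:{\cal P}\to{\cal P}$, $T[x^k]=(-1)^kq^{-k(k-1)/2}x^k$, with $T^{-1}[x^k]=(-1)^kq^{k(k-1)/2}x^k$. Comparing the two series term by term via~(\ref{hypergeom}) (using $(aq^n;q)_k\to1$ and the drop of the excess from $s-r$ to $s-r-1$) gives the key identity
\begin{equation*}
\lim_{a\to0}\hypergeomq{r+2}{s}{q^{-n},aq^n,(a_r)}{(b_s)}{q}{qx}=T\!\left[\hypergeomq{r+1}{s}{q^{-n},(a_r)}{(b_s)}{q}{qx}\right],
\end{equation*}
and the same identity holds, with the same operator $T$, for the target family ${}_{l+2}\phi_h\to{}_{l+1}\phi_h$ as $c\to0$.

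For the inversion formula I would put $a=0$ in~(\ref{inversiongen}). The factors $(aq^m;q)_m$ and $(aq^{2m+1};q)_{n-m}$ collapse to $1$, and by the identity above each summand is $T$ applied to the polynomial~(\ref{basic2}) of index $m$. Pulling $T$ out of the finite sum by linearity and then applying $T^{-1}$ to both sides, I would use $T^{-1}[x^n]=(-1)^nq^{n(n-1)/2}x^n$; this single extra factor on the left is exactly what lowers the prefactor exponent from $r+1-s$ to $r-s$, turning the specialized~(\ref{inversiongen}) into~(\ref{inversionbasic1}).

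For the connection formula I would put $a=c=0$ in~(\ref{connectiongen}). Then $(aq^n;q)_m=(cq^m;q)_m=1$; in the balanced ${}_{r+h+2}\phi_{s+l+1}$ the numerator entry $aq^{m+n}$ and the denominator entry $cq^{2m+1}$ both vanish, and since removing one top and one bottom parameter leaves both the excess $s+l-r-h$ and the argument $q^{1+m(s+l-r-h)}$ unchanged, this series contracts to the ${}_{r+h+1}\phi_{s+l}$ appearing in~(\ref{connectionbasic1}). The source polynomial on the left is replaced by $T$ of~(\ref{basic2}) and each basis element on the right by $T$ of the corresponding ${}_{l+1}\phi_h$; because the \emph{same} $T$ sits on both sides, one factors it out and applies $T^{-1}$, so the two copies cancel and the connection coefficients are simply the $a=c=0$ specialization of those in~(\ref{connectiongen}) --- that is,~(\ref{connectionbasic1}).

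The main obstacle is therefore neither the substitution nor the elementary simplifications of $q$-shifted factorials (these repeat the bookkeeping already done in the proof of Theorem~\ref{thmmain}), but the correct accounting of the normalization exponent $1+s-r$ under the confluence $a,c\to0$, encoded in the intertwining identity for $T$. Once that identity is established, the inversion case absorbs a single compensating factor $(-1)^nq^{n(n-1)/2}$, while the connection case, with $T$ acting symmetrically on source and target, inherits its coefficients unchanged.
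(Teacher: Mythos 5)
Your strategy is the paper's own: the paper's entire proof of this corollary is the single sentence preceding it (``If we put $a=c=0$ in (\ref{inversiongen}) and (\ref{connectiongen})\ldots''). What you add --- and what the paper silently skips --- is the observation that under the normalization in (\ref{hypergeom}) the substitution $a=0$ does \emph{not} literally turn the ${}_{r+2}\phi_s$ of (\ref{basic1}) into the ${}_{r+1}\phi_s$ of (\ref{basic2}), since deleting a numerator entry raises the exponent $1+s-r$ by one. Your intertwining identity $\lim_{a\to0}{}_{r+2}\phi_s=T\bigl[{}_{r+1}\phi_s\bigr]$ is correct, and your inversion derivation --- pull $T$ out of the finite sum, apply $T^{-1}$, absorb $T^{-1}[x^n]=(-1)^nq^{n(n-1)/2}x^n$ into the prefactor so that $r+1-s$ drops to $r-s$ --- reproduces (\ref{inversionbasic1}) exactly; the fact that the printed inversion formula indeed has exponent $r-s$ confirms this is the accounting the authors performed implicitly.

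For the connection formula your reasoning is again sound, but what it proves is \emph{not} the formula as printed, and the discrepancy lies in the paper, not in your argument. Since the same $T$ hits both sides and cancels, the coefficients are the literal $a=c=0$ specialization of (\ref{connectiongen}), which carry the factor $q^{\frac{m(m-1)}{2}(s+l-r-h)}$; the printed (\ref{connectionbasic1}) instead carries $q^{\frac{m(m-1)}{2}(s+l-r-1-h)}$, off by $q^{-m(m-1)/2}$. Your version is the correct one: take $r=s=l=h=0$ with empty parameter lists, i.e.\ connect $P_n(x)={}_1\phi_0(q^{-n};-;q,qx)$ to itself, where the coefficients must be $\delta_{mn}$. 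The specialized (\ref{connectiongen}) gives $C_n(n)={}_1\phi_0(q^{0};-;q,q)=1$, while the printed exponent gives $C_n(n)=q^{-n(n-1)/2}\neq1$ for $n\geq2$ (note also that the printed sign factor $(-1)^{m(s+l-r-h)}$ is \emph{not} shifted by $-1$, an internal inconsistency of the stated formula). So your final step should not read ``\ldots that is, (\ref{connectionbasic1})''; it should assert that the specialization yields (\ref{connectionbasic1}) with $s+l-r-1-h$ corrected to $s+l-r-h$. The spurious $-1$ evidently comes from transporting the inversion-case adjustment --- where a single $T^{-1}$ really does act on $x^n$ and shifts the exponent --- to the connection case, where, as you correctly argue, the two copies of $T$ act symmetrically and no adjustment is due.
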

The following two corollaries provide inversion and connection formulae for generalized hypergeometric polynomials given by (\ref{srpa}) and (\ref{braf}). These results can be obtained as a limit cases of Eqs. (\ref{inversiongen}), (\ref{connectiongen}), (\ref{inversionbasic1}) and (\ref{connectionbasic1}) when $q\rightarrow 1^-$ and by using (\ref{lim qhyp hyp}).
\begin{cor} The inversion and connection formulae associated to (\ref{srpa}) are given by
\begin{equation}\label{invsrpa}
x^n=\frac{(b_s)_n}{(a_r)_n}\mathop\sum_{m=0}^n\left(\begin{array}{c}n\\m\end{array}\right)
\frac{(-1)^m}{(\lambda+m)_m(\lambda+2m+1)_{n-m}}
\hypergeom{r+2}{s}{-m,\lambda+m,(a_r)}{(b_s)}{x},
\end{equation}
and
\begin{eqnarray}\label{connectionsrpa}
\hypergeom{r+2}{s}{-n,\lambda+n,(a_r)}{(b_s)}{x}&=&\mathop\sum_{m=0}^n\left(\begin{array}{c}n\\m\end{array}\right)
\frac{(\lambda+n)_m}{(\beta+m)_m}\frac{[a_r]_m}{[b_r]_m}\frac{[d_s]_m}{[c_r]_m}\nonumber\\
&\times&\hypergeom{r+s+2}{r+s+1}{m-n,\lambda+n+m,(a_r+m),(d_s+m)}{\beta+2m+1,(b_s+m),(c_r+m)}{1}\nonumber\\
&\times&\hypergeom{r+2}{s}{-m,\beta+m,(c_r)}{(d_s)}{x}.
\end{eqnarray}
\end{cor}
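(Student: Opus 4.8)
The plan is to obtain both identities as the confluent limit $q\to1^{-}$ of the basic-hypergeometric formulae (\ref{inversiongen}) and (\ref{connectiongen}) of Theorem~\ref{thmmain}, specialized to $l=r$ and $h=s$ so that the series structures coincide with those appearing in (\ref{srpa}). Concretely, in (\ref{inversiongen})--(\ref{connectiongen}) I would set $a=q^{\lambda}$, $c=q^{\beta}$, and replace every parameter by its exponential counterpart, $a_i\mapsto q^{a_i}$, $b_i\mapsto q^{b_i}$, $c_i\mapsto q^{c_i}$, $d_i\mapsto q^{d_i}$, and rescale the variable according to (\ref{lim qhyp hyp}) so that each terminating $_{p}\phi_{s}$ factor tends to the corresponding $_{p}F_{s}$. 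The two elementary limits I rely on are $\lim_{q\to1^{-}}\tfrac{(q^{a};q)_k}{(1-q)^k}=(a)_k$, which turns every $q$-shifted factorial into a Pochhammer symbol, together with ${n\brack m}_q\to\binom{n}{m}$ and $q^{(\text{quadratic in }k)}\to1$. Because every sum involved is finite (all the series terminate), the limit may be taken term by term.

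For the inversion formula (\ref{invsrpa}) I would substitute into (\ref{inversiongen}) and read off the asymptotics of each factor: the prefactor $\tfrac{[b_s;q]_n}{[a_r;q]_n}$ behaves like $(1-q)^{(s-r)n}\tfrac{(b_s)_n}{(a_r)_n}$; the denominator $(aq^m;q)_m(aq^{2m+1};q)_{n-m}=(q^{\lambda+m};q)_m(q^{\lambda+2m+1};q)_{n-m}$ behaves like $(1-q)^{n}(\lambda+m)_m(\lambda+2m+1)_{n-m}$; and the terminating $_{r+2}\phi_s$ tends to $_{r+2}F_s(-m,\lambda+m,(a_r);(b_s);x)$. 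Collecting powers of $(1-q)$, both sides carry the common factor $(1-q)^{(s-r-1)n}$, which I would divide out to obtain a finite identity. The one genuinely delicate point is the sign bookkeeping: the explicit prefactor $\big((-1)^nq^{n(n-1)/2}\big)^{r+1-s}$ tends to $(-1)^{n(r+1-s)}$, and this must be matched against the sign produced by the variable rescaling. Reading (\ref{lim qhyp hyp}) for a $_{r+2}\phi_s$ with the signed scaling $(q-1)^{s-r-1}$ (the signed form of $(1-q)^{\,s+1-(r+2)}$) makes the $_{r+2}\phi_s$ factor converge cleanly, while the substitution $x=(q-1)^{s-r-1}X/q$ makes the left-hand $x^n$ contribute exactly the compensating factor $(-1)^{n(r+1-s)}$; the two signs cancel and (\ref{invsrpa}) emerges with no spurious factor. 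I regard verifying that these signs and $q$-powers cancel precisely — rather than the routine Pochhammer limits — as the main obstacle.

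For the connection formula (\ref{connectionsrpa}) the specialization $l=r$, $h=s$ is what makes the limit transparent, since then $s+l-r-h=0$. Consequently every factor $(-1)^{m(s+l-r-h)}$ and $q^{\frac{m(m-1)}{2}(s+l-r-h)}$ equals $1$, the factor $q^{m(m-n)}\to1$, and the argument $q^{1+m(s+l-r-h)}=q$ of the middle series tends to $1$; moreover its scaling exponent $(s+l+1)+1-(r+h+2)=0$ shows no rescaling is needed, so $_{r+h+2}\phi_{s+l+1}\to {}_{r+s+2}F_{r+s+1}(\,\cdot\,;1)$, exactly the inner hypergeometric in (\ref{connectionsrpa}). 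The remaining coefficients pass to the limit by the same elementary rules: ${n\brack m}_q\to\binom{n}{m}$, $\tfrac{(aq^n;q)_m}{(cq^m;q)_m}=\tfrac{(q^{\lambda+n};q)_m}{(q^{\beta+m};q)_m}\to\tfrac{(\lambda+n)_m}{(\beta+m)_m}$, and $\tfrac{[a_r;q]_m[d_h;q]_m}{[b_s;q]_m[c_l;q]_m}\to\tfrac{[a_r]_m[d_s]_m}{[b_s]_m[c_r]_m}$, where now all powers of $(1-q)$ cancel within each summand because $r+s-s-r=0$. Finally, the two outer $_{r+2}\phi_s$ factors (on the left of (\ref{connectiongen}) and in its last line) are of identical type and are rescaled by the same $(q-1)^{s-r-1}$, hence both tend to $_{r+2}F_s$ in the same variable; matching them yields (\ref{connectionsrpa}). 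Thus both identities follow once the $(1-q)$-power and sign accounting above is carried out in full, the sign cancellation in the inversion case being the only step requiring care.
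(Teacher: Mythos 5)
Your proposal is correct and follows exactly the paper's own route: the paper derives this corollary precisely as the limit $q\to 1^-$ of (\ref{inversiongen}) and (\ref{connectiongen}) via (\ref{lim qhyp hyp}), which is what you carry out (with the same specializations $a=q^{\lambda}$, $c=q^{\beta}$, $l=r$, $h=s$). In fact your treatment is more careful than the paper's one-line justification, since you correctly note that the scaling in (\ref{lim qhyp hyp}) must be read in its signed form $(q-1)^{s+1-r}$ for the prefactor $\bigl((-1)^n q^{n(n-1)/2}\bigr)^{r+1-s}$ to cancel against the rescaling of $x^n$, a sign subtlety the paper glosses over.
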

\begin{cor} The inversion and connection formulae associated to (\ref{braf}) are given by
\begin{equation}\label{inversionbraf}
x^n=\frac{[b_s]_n}{[a_r]_n}\mathop\sum_{m=0}^n\left(\begin{array}{c}n\\m\end{array}\right)(-1)^m
\hypergeom{r+1}{s}{-m,(a_r)}{(b_s)}{x},
\end{equation}
and
\begin{eqnarray}\label{connectionbraf}
\hypergeom{r+1}{s}{-n,(a_r)}{(b_s)}{x}&=&\mathop\sum_{m=0}^n\left(\begin{array}{c}n\\m\end{array}\right)\frac{[a_r]_m}{[b_r]_m}\frac{[d_s]_m}{[c_r]_m}\nonumber\\
&\times&\hypergeom{r+s+1}{r+s}{m-n,(a_r+m),(d_s+m)}{(b_s+m),(c_r+m)}{1}\nonumber\\
&\times&\hypergeom{r+1}{s}{-m,(c_r)}{(d_s)}{x}.
\end{eqnarray}
\end{cor}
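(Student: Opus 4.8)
The plan is to derive the two identities (\ref{inversionbraf}) and (\ref{connectionbraf}) as the $q\rightarrow1^-$ limits of their $q$-analogues (\ref{inversionbasic1}) and (\ref{connectionbasic1}), which are themselves the $a=c=0$ specialisation of Theorem~\ref{thmmain}. The only tools I would invoke are the substitution limit (\ref{lim qhyp hyp}) and the elementary relation $\lim_{q\rightarrow1^-}(q^a;q)_k/(1-q)^k=(a)_k$, both recorded in the introduction; no structural property of the polynomials beyond these is needed.

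First I would perform the standard parameter substitution $a_i\mapsto q^{a_i}$, $b_j\mapsto q^{b_j}$, $c_i\mapsto q^{c_i}$, $d_j\mapsto q^{d_j}$ in (\ref{inversionbasic1})--(\ref{connectionbasic1}) and, simultaneously, rescale the argument by the power of $(1-q)$ prescribed by (\ref{lim qhyp hyp}): for a series of type ${}_{r+1}\phi_s$ this exponent is $s-r$, so that every terminating basic series in the formulae is put into exactly the normalisation under which (\ref{lim qhyp hyp}) converts it into the corresponding ${}_{r+1}F_s$. The essential observation is that the corollary connects two families of the same type, i.e.\ the $(c,d)$-family is again a ${}_{r+1}F_s$; this is the case $l=r$, $h=s$ of the general statement, for which $s+l-r-h=0$. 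That single identity does all the work: it makes the sign $(-1)^{m(s+l-r-h)}$ equal to $1$, collapses the exponents $q^{\frac{m(m-1)}{2}(s+l-r-1-h)}$ and $q^{m(m-n)}$ to factors tending to $1$, and, most importantly, renders the inner series balanced, so that its argument $q^{1+m(s+l-r-h)}=q$ tends to $1$ and (\ref{lim qhyp hyp}) turns it into the terminating ${}_{r+s+1}F_{r+s}$ at argument $1$.

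Next I would take the limit factor by factor. The $q$-binomial ${n\brack m}_q$ tends to $\binom{n}{m}$; each ratio of $q$-shifted factorials such as $[a_r;q]_m/[b_s;q]_m$ and $[d_s;q]_m/[c_r;q]_m$ tends to the corresponding ratio of Pochhammer symbols; and the surviving powers of $q$ tend to $1$. The two outer basic series then become $\hypergeom{r+1}{s}{-n,(a_r)}{(b_s)}{x}$ and $\hypergeom{r+1}{s}{-m,(c_r)}{(d_s)}{x}$, while the middle one becomes the balanced $\hypergeom{r+s+1}{r+s}{m-n,(a_r+m),(d_s+m)}{(b_s+m),(c_r+m)}{1}$; reassembling these gives (\ref{connectionbraf}), and the same steps applied to (\ref{inversionbasic1}) give (\ref{inversionbraf}).

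The hard part will be the bookkeeping of the powers of $(1-q)$ together with the residual signs, and here some care with (\ref{lim qhyp hyp}) is needed. Each factor $(q^a;q)_m$ contributes $(1-q)^m(a)_m$, so the prefactor $[b_s;q]_n/[a_r;q]_n$ carries $(1-q)^{(s-r)n}$, while the rescaling of $x^n$ on the left carries a compensating $(1-q)^{(s-r)n}$; in the factorial ratios the numerator and the denominator each contain $r+s$ such factors, whose $(1-q)$-powers must cancel against one another. One must likewise check that the factor $\left((-1)^nq^{\frac{n(n-1)}{2}}\right)^{r-s}$ in (\ref{inversionbasic1}) and the sign carried by the rescaling combine so that no stray factor $(-1)^{(s-r)n}$ survives; this is precisely the point at which the rescaling has to be read consistently with the factor $\left((-1)^nq^{\frac{n(n-1)}{2}}\right)^{1+s-r}$ appearing in the definition (\ref{hypergeom}) of ${}_r\phi_s$. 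The balanced choice $l=r$, $h=s$ is exactly what makes all these powers and signs cancel, leaving the clean hypergeometric identities. An alternative that avoids the limit altogether is to repeat the proof of Theorem~\ref{thmmain} verbatim with ordinary hypergeometric functions in place of basic ones, using the inversion Lemma and the composition formula (\ref{formule connection}); but the limiting route is the shorter one now that the $q$-results are in hand.
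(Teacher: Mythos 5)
Your proposal is correct and takes essentially the same route as the paper: the paper obtains this corollary precisely as the $q\rightarrow 1^-$ limit of (\ref{inversionbasic1}) and (\ref{connectionbasic1}) (the $a=c=0$ case of Theorem~\ref{thmmain}), using (\ref{lim qhyp hyp}), which is exactly your argument, including the key observation that the target family corresponds to $l=r$, $h=s$ so that $s+l-r-h=0$. Your explicit bookkeeping of the $(1-q)$ powers and of the sign interplay between the prefactor $\left((-1)^nq^{\frac{n(n-1)}{2}}\right)^{r-s}$ and the rescaling implied by the factor $\left((-1)^nq^{\frac{n(n-1)}{2}}\right)^{1+s-r}$ in (\ref{hypergeom}) is actually more careful than the paper's one-sentence justification.
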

The inversion and connection formulae (\ref{invsrpa})--(\ref{connectionbraf}) were already given in \cite{chaggara2006a} via generating functions manipulations.

Note here that, as in the classical Laguerre case \cite{rota1973}, the generalized hypergeometric polynomial:
$$\tilde{L}_n^{((\alpha_r);(\beta_s))}(x):
=\frac{[\beta_s]_n}{[\alpha_r]_n}
\ _{r+1}F_s\left(\begin{array}{l}-n,\ (\alpha_r)\\
(\beta_s)\end{array};x\right)$$
 is self inverse. That is to say, polynomial with same expression of the coefficients in the direct and in the inverse formulae. This can be translated in the \textit{Umbral calculus} context by:
$$\tilde{L}_n^{((\alpha_r);(\beta_s))}
(\tilde{\textbf{L}}_n^{((\alpha_r);(\beta_s))}(x))=x^n.$$
\section{Connection coefficients between $d$-orthogonal basic polynomials}
 \label{connectiondortho}
Next we apply the obtained results to some $d$-orthogonal basic hypergeometric polynomials.\\
The notion of $d$-orthogonal polynomials generalize the standard orthogonal polynomials in that they satisfy a $d$ orthogonality conditions and they obey a higher-order recurrence relation \cite{maroni1989}.
This kind of orthogonality appears as a special case of the general multiple orthogonality. In fact, the $d$-orthogonal polynomials correspond to multiple orthogonal polynomials near the diagonal \cite{ismail2005}.\\
The concept of $d$-orthogonality has been the subject of numerous investigations and applications. In particular, it is connected with the study of vector Pad\'{e} approximants, vectorial continued fractions, resolution of higher-order differential equations and spectral study of multi-diagonal nonsymmetric operators.\\
Most of the known explicit examples of $d$-orthogonal polynomial sets were introduced by solving a characterization
problem that consists in finding all $d$-orthogonal polynomials, satisfying a given property.\\
The basic hypergeometric $d$-orthogonal polynomials that will be considered here generalize, firstly, the known $q$-Meixner, big $q$-Laguerre, little $q$-Laguerre and $q$-Laguerre orthogonal polynomials and, on the other hand, can be viewed as $q$-analogs of the $d$-orthogonal polynomials of Meixner and Laguerre type.\\
The inversion coefficients for $d$-orthogonal polynomials were used to derive the $d$-dimensional functional vectors ensuring their $d$-orthogonality \cite{neila2015b}.
The $q$ analogs of the $d$-orthogonal polynomial sets were introduced and investigated in details in \cite{lamiri2013}.\\
\paragraph{$d$-orthogonal little $q$-Laguerre type}\ \\
The $d$-orthogonal of little $q$-Laguerre type polynomials are defined by the basic hypergeometric sum \cite{lamiri2013}:
\begin{equation}
p_n(x,(b_s)/q)=\hypergeomq{d+1}{s}{q^{-n},0,\dots,0}{(b_s)}{q}{qx}.
\end{equation}
By (\ref{inversionbasic1}) and (\ref{connectionbasic1}), the inversion and connection formulae are given by
\begin{equation}
x^n=\mathop\sum_{m=0}^n{n\brack m}_q
\left((-1)^nq^{\frac{n(n-1)}{2}}\right)^{d-s}[b_s;q]_n(-1)^mq^{\frac{m(m-1)}{2}}p_n(x,(b_s)/q),
\end{equation}
and
\begin{equation}
p_n(x,(b_s)/q) =\mathop\sum_{m=0}^n{n\brack m}_qq^{m(m-n)}\frac{[\beta_s;q]_m}{[b_s;q]_m}
\hypergeomq{s+1}{s}{q^{m-n},(\beta_sq^m)}{(b_sq^m)}{q}{q} p_m(x,(\beta_s)/q).
\end{equation}
\paragraph{$d$-orthogonal $q$-Meixner type}\ \\
The $d$-orthogonal polynomials of $q$-Meixner type are defined by \cite{lamiri2013}
\begin{equation}
M_n(q^{-x};(b_d),c;q)=\hypergeomq{2}{d}{q^{-n},q^{-x}}{(d_d)}{q}{-\frac{q^{n+1}}{c}}
\end{equation}
The following inversion and connection relations are valid:
\begin{equation}
(q^{-x};q)_n=\mathop\sum_{m=0}^n [b_d;q]_n\left((-1)^nq^{\frac{n(n-1)}{2}}\right)^{1-d}(-1)^{m+n}q^{\frac{m(m-1)}{2}-mn}(c)^n
{n\brack m}_qM_m(q^{-x};(b_d),c;q),
\end{equation}
and
\begin{eqnarray}
M_n(q^{-x};(b_d),c;q)&=&\mathop\sum_{m=0}^n \left(\frac{c}{\gamma}\right)^m{n\brack m}_q\frac{[\beta_d;q]_m}{[b_d;q]_m}\nonumber\\
&\times &
\hypergeomq{d+1}{d}{q^{m-n},(\beta_dq^m)}{(b_d)}{q}{\frac{\gamma}{c}q^{1+n-m}}M_m(q^{-x};(\beta_d),\gamma;q).
\end{eqnarray}
\paragraph{$d$-orthogonal big $q$-Laguerre type}\ \\
The $d$-orthogonal polynomial of big $q$-Laguerre type has the following $q$-hypergeometric representation \cite{lamiri2013}:
\begin{equation}
P_n(x;(b_{d+1});q)=\hypergeomq{d+2}{d+1}{q^{-n},0,\dots,0,x}{(b_{d+1})}{q}{q}.
\end{equation}
This polynomial set fulfil the following inversion ans connection formulae:
\begin{equation}
(x;q)_n=\mathop\sum_{m=0}^n[b_{d+1};q]_nq^{\frac{m(m-1)}{2}}
{n\brack m}_qP_m(x;(b_{d+1});q),
\end{equation}
and
\begin{eqnarray}
P_n(x;(b_{d+1});q)&=&\mathop\sum_{m=0}^n {n\brack m}_qq^{m(m-n)}\frac{[\beta_{d+1};q]_m}{[b_{d+1};q]_m}\nonumber\\
&\times &\hypergeomq{d+2}{d+1}{q^{m-n},(\beta_{d+1}q^m)}{(b_{d+1}q^m)}{q}{q} P_m(x;(\beta_{d+1});q)
\end{eqnarray}
\paragraph{$d$-orthogonal $q$-Laguerre type}\ \\
The explicit expression of the $d$-orthogonal polynomial of $q$-Laguerre type is as follows \cite{lamiri2013}:
\begin{equation}
L_n^{(b_1,\dots,b_d)}(x;q)=\hypergeomq{1}{d}{q^{-n}}{(b_d)}{q}{q^nx}
\end{equation}
The $d$-orthogonal $q$-Laguerre polynomials satisfy the following inversion and connection formulae:
\begin{equation}\label{inqlag}
x^n=[b_d;q]_n\mathop\sum_{m=0}^n {n\brack m}_q
\left((-1)^nq^{\frac{n(n-1)}{2}}\right)^{-d}q^{\frac{m(m-1)}{2}-mn}(-1)^mL_m^{(b_1,\dots,b_d)}(x;q),
\end{equation}
and
\begin{eqnarray}\label{connqlag}
L_n^{(b_1,\dots,b_d)}(x;q)&=&\mathop\sum_{m=0}^n{n\brack m}_q \frac{[\beta_{d};q]_m}{[b_{d};q]_m}\hypergeomq{d+1}{d}{q^{m-n},(\beta_{d}q^m)}{(b_{d}q^m)}{q}{q^{1+n-m}}L_m^{(\beta_1,\dots,\beta_d)}(x;q).
\end{eqnarray}
\section{Connection relation for basic orthogonal hypergeometric polynomials}
\label{connectionqpoly}
In this section we deal with families of basic hypergeometric orthogonal polynomials
appearing in the $q$-Askey scheme. We give the corresponding inversion and connection coefficients without using the orthogonality property. In this way, the obtained formulae are still valid outside the range of orthogonality of the parameters.\\
 The $q$-Askey scheme is a $q$-analogue of the Askey scheme. It contains specific orthogonal polynomials which can be written in terms of basic hypergeometric functions starting in the top with Askey-Wilson polynomials and $q$-Racah polynomials and ending in the bottom with continuous and discrete $q$-Hermite polynomials and Stieltjes-Wigert polynomials. The inversion and connection formulae for Askey-Wilson and $q$-Racah polynomials follow from (\ref{inversiongen}) and (\ref{connectiongen}) and they will be given in the sequel of this section. The corresponding expansions for the remainder polynomials within the $q$-Askey scheme are quoted in Tables~\ref{inversionq-askey} and~\ref{connectionq-askey}. They can be obtained either as limit cases and specialization process from Askey-Wilson and $q$-Racah polynomials or by mean of the general inversion and connection formulae obtained in Section \ref{connectionbasic}. The polynomial basis considered in Table~\ref{inversionq-askey} are suggested by the basic hypergeometric representation of each family (see \cite{koekoek2010}, for more details).
\paragraph{Askey-Wilson}\ \\
The Askey–Wilson polynomial set is a family of orthogonal polynomials introduced by Askey and Wilson in \cite{askeywilson85} as $q$-analogs of the Wilson polynomials. They include many of other orthogonal polynomials as special or limiting cases. The Askey-Wilson polynomials belong to the so-called classical orthogonal polynomials on a non uniform lattice which are known to satisfy a particular divided-difference equation \cite[Chapter14]{koekoek2010}. The Askey-Wilson polynomials are defined by \cite{askeywilson85}
 \begin{equation}\label{inaskwil}
P_n(x;a,b,c,d/q)=\frac{(ab,ac,dq;q)_n}{a^{n}}
\hypergeomq{4}{3}{q^{-n},abcdq^{n-1},ae^{i\theta},ae^{-i\theta}}{ab,ac,ad}{q}{q},\;x=\cos(\theta),
 \end{equation}
where $(ab,ac,dq;q)_n:=(ab;q)_n(ac;q)_n(dq;q)_n$.

The following expansion formulae of Askey-Wilson basis in terms of Askey-Wilson polynomials is valid:
\begin{equation}\label{conaskwil}
   (ae^{i\theta};q)_n (a^{-i\theta};q)_n=\mathop\sum_{m=0}^n{n\brack m}_qq^{\frac{m(m-1)}{2}}(-a)^m\frac{(abq^m,acq^m,adq^m;q)_{n-m}}
   {(abcdq^{m-1};q)_m(abcdq^{2m};q)_{n-m}}P_m(x;a,b,c,d/q).
   \end{equation}
The connection formula between two Askey-Wilson polynomials, with first common parameter, is given by
   \begin{eqnarray}
 P_n(x,a,b,c,d/q)&=&\mathop\sum_{m=0}^n\frac{a^{m-n}q^{m(m-n)}}{(q;q)_{n-m}}
 \frac{(ab,ac,ad,q;q)_n(abcdq^{n-1};q)_m}{(ab,ac,ad,q;q)_m(a \beta\gamma\delta q^{m-1};q)_m}\nonumber \\
&\times&\hypergeomq{5}{4}{q^{m-n},a\beta q^m,a\gamma q^m,a\delta q^m,abcdq^{n+m-1}}{abq^m,acq^m,adq^m,a\beta\gamma\delta q^{2m}}{q}{q}
P_m(x,a,\beta,\gamma,\delta/q).
 \end{eqnarray}
Note that, the inversion and connection problems for Askey-Wilson polynomials were already studied by many authors: Askey and Wilson used orthogonality assumption \cite{askeywilson85}, Area et al. used Verma Formula \cite{navima2001}. However, Foupouagnigni et al. solved this problem recurrently by computer algebra tools \cite{mama2013}.
\paragraph{$q$-Racah}\ \\
The $q$-Racah polynomials is a set of orthogonal polynomials that generalize the Racah coefficients or $6-j$ symbols.
They were introduced in \cite{askeywilson79}. Their hypergeometric representation is given by
\begin{equation}
 R_n(\nu(x);\alpha,\beta,\delta;\gamma/q)=\hypergeomq{4}{3}{q^{-n},\alpha\beta q^{n+1},q^{-x},\delta\gamma q^{x+1}}{\alpha q,\beta \delta q,\gamma q}{q}{q},\;n=0,1,...,N,
 \end{equation}
 where $$\nu(x)=q^{-x}+\delta\gamma q^{x+1},$$
 and $\alpha q= q^{-N}$ for some integer $N$.\\
 Clearly, $R_n(\nu(x);\alpha,\beta,\delta;\gamma/q)$ is a polynomial of degree $n$ in $\nu(x)$ and the case $q\rightarrow 1$ gives the Racah polynomials.

By (\ref{inversiongen})-(\ref{connectiongen}), the following inversion and connection formulae of $q$-Racah polynomials are valid:
 \begin{equation}
 (q^{-x},\gamma\delta q^{x+1};q)_n=\mathop\sum_{m=0}^n(-1)^m{ n\brack m }_qq^{\frac{m(m-1)}{2}}\frac{(\alpha q,\beta\delta q,\gamma q;q)_n}{(\alpha\beta q^{m+1};q)_m(\alpha\beta q^{2m+2};q)_{n-m}}R_m(\nu(x);\alpha,\beta,\delta;\gamma/q),
 \end{equation}
 and
 \begin{eqnarray}
  R_n(\mu(x);\alpha,\beta,\gamma,\delta/q)&=&\mathop\sum_{m=0}^n { n\brack m }_qq^{m(m-n)}\frac{(\alpha\beta q^{n+1},aq,bd q,cq;q)_m}{(\alpha q,\beta\delta q,\gamma q,abq^{m+1};q)_m}\nonumber\\
 &\times&\hypergeomq{5}{4}{q^{m-n},\alpha\beta q^{m+n+1},aq^{m+1},bd q^{m+1},cq^{m+1}}{\alpha q^{m+1},\beta\delta q^{m+1},abq^{2m+2},\gamma q^{m+1}}{q}{q}
 R_m(\mu(x);a,b,c,d/q),\nonumber\\
 \end{eqnarray}
  provided that $\gamma\delta=cd$.
\begin{center}
\footnotesize
{\renewcommand{\arraystretch}{2} 
{\setlength{\tabcolsep}{0.1cm} 
\begin{longtable}{|c c c|}\caption{{Inversion coefficients in the $q$-Askey scheme}} \label{inversionq-askey}\\
\hline
$\textbf{Polynomial set}$ &\begin{tabular}{cc}  $\textbf{Polynomial sets  basis}$\\ $\textbf{$\{B_n\}_n\geq0$}$\end{tabular}& $\textbf{Inversion coefficients $I_m(n)$}$ \\
\hline
\endfirsthead
Askey-Wilson:
$P_n(x;a,b,c,d/q)$  & $(ae^{i\theta},ae^{-i\theta};q)_n$ &${n\brack m}_qq^{\frac{m(m-1)}{2}}(-a)^m\frac{(abq^m,acq^m,adq^m;q)_{n-m}}{(abcdq^{m-1};q)_m(abcdq^{2m};q)_{n-m}}$ \\
\hline
$q$-Racah $: R_n(\nu(x);\alpha,\beta,\delta;\gamma/q)$  &$(q^{-x},\gamma\delta q^{x+1};q)_n$ &$(-1)^m{n\brack m}_qq^{\frac{m(m-1)}{2}}\frac{(\alpha q,\beta\delta q,\gamma q;q)_n}{(\alpha\beta q^{m+1};q)_m(\alpha\beta q^{2m+2};q)_{n-m}}$ \\
\hline
 Big $q$-Jacobi $: P_n(x;a,b,c;q)$ & $(x;q)_n$ &  $(-1)^m{n\brack m}_qq^{\frac{m(m-1)}{2}}\frac{(aq,cq;q)_n}{(abq^{2m+2};q)_{n-m}(abq^{m+1};q)_m}$\\
\hline
 $q$-Hahn $: Q_n(q^{-x};\alpha,\beta,N/q)$  &  $(q^{-x};q)_n$ & ${n\brack m}_qq^{\frac{m(m-1)}{2}}\frac{(q^{-N},\alpha q;q)_n(-1)^m}{(\alpha\beta q^{m+1};q)_m(\alpha\beta q^{2m+2};q)_{n-m}}$ \\
\hline
 Dual $q$-Hahn $: R_n(\nu(x);\gamma,\delta,N/q)$  & $(q^{-x},\gamma\delta q^{x+1};q)_n $ & $(\gamma q,q^{-N};q)_n(-1)^m{n\brack m}_qq^{\frac{m(m-1)}{2}} $ \\
\hline
 Al-Salam-Chihara $: Q_n(x;a,b/q)$&  $(ae^{i\theta},ae^{-i\theta};q)_n $ & $(-a)^m(abq^m;q)_{n-m}{n\brack m}_qq^{\frac{m(m-1)}{2}}$ \\
\hline
 $q$-Meixner-Pollaczek $: P_n(x;a/q)$& $ (ae^{i(\theta+2\phi)},ae^{-i\theta};q)_n$ & $(a^2q^m;q)_{n-m}(q;q)_m(-ae^{i\phi})^m {n\brack m}_qq^{\frac{m(m-1)}{2}}$ \\
\hline
  \multirow{2}*{Continuous $q$ Jacobi $: P_n^{(\alpha,\beta)}(x/q)$} &   $\left(q^{\frac{1}{2}\alpha+\frac{1}{4}}e^{i\theta};q\right)_n$ & $ (-q^{\frac{1}{2}(\alpha+\beta+1)},-q^{-\frac{1}{2}(\alpha+\beta+2)};q)_n{n\brack m}_q$\\
  &$\times\left(q^{\frac{1}{2}\alpha+\frac{1}{4}}e^{-i\theta};q\right)_n $& $\times \frac{(-1)^m(q;q)_m(q^{\alpha+m+1};q)_{n-m}}{(q^{m+\alpha+\beta+1};q)_m(q^{2m+\alpha+\beta+2};q)_{n-m}}q^{\frac{m(m-1)}{2}}$ \\
\hline
\begin{tabular}{cc}Continuous $q$-Ultraspherical/ \\
Rogers: $C_n(x;\beta/q)$ \end{tabular}&  $\left(\beta^{\frac{1}{2}}e^{i\theta},\beta q^{-i\theta};q\right)_n$ & \begin{tabular}{cc}${n\brack m}_qq^{\frac{m(m-1)}{2}}\frac{\left(\beta q^{\frac{1}{2}},-\beta,-\beta q^{\frac{1}{2}};q\right)_n}{(\beta^2q^m;q)_m(\beta^2;q)_m}$\\$\times\frac{\left(-\beta^{\frac{1}{2}}\right)^m(q;q)_m}{(\beta^2q^{2m+1};q)_{n-m}} $ \end{tabular}\\
\hline
Continuous $q$-Legendre: $P_n(x/q)$ & $\left(q^{\frac{1}{4}}e^{i\theta},q^{\frac{1}{4}}e^{-i\theta};q\right)_n $ & ${n\brack m}_q\frac{\left(q,-q^{\frac{1}{2}},-q;q\right)_n(-1)^m}{(q^{m+1};q)_m(q^{2m+2};q)_{n-m}}q^{\frac{m(m-1)}{2}} $ \\
\hline
Big $q$-Laguerre: $P_n(x;a,b,q)$  &$(x;q)_n $&$(aq,bq;q)_n(-1)^m{n\brack m}_qq^{\frac{m(m-1)}{2}} $\\
\hline
Little $q$-Jacobi: $P_n(x;a,b/q) $ &$ x^n$&${n\brack m}_q\frac{(-1)^m(aq;q)_n}{(abq^{m+1};q)_m(abq^{2m+2};q)_{n-m}} q^{\frac{m(m-1)}{2}}$\\
\hline
Little $q$-Legendre: $P_n(x/q) $ &$x^n $&$ {n\brack m}_q\frac{q^{\frac{m(m-1)}{2}}(-1)^m(q;q)_n}{(q^{m+1};q)_m(q^{2m+2};q)_{n-m}}$\\
\hline
$q$-Meixner: $M_n(q^{-x},b,c;q) $ &$(q^{-x};q)_n $&$(bq;q)_n(-1)^{m+n}c^n{n\brack m}_qq^{\frac{m(m-1)}{2}-mn} $\\
\hline

\begin{tabular}{cc}Quantum $q$-Krawtchouk \\$K_n(q^{-x};p,-N;q)$\end{tabular}& $(q^{-x};q)_n $&$\left(p\right)^{-n}(q^{-N};q)_n(-1)^m{n\brack m}_qq^{\frac{m(m-1)}{2}-mn} $\\
\hline
$q$-Krawtchouk: $K_n(q^{-x};p,N;q)$ &$(q^{-x};q)_n$ & $(-1)^m{n\brack m}_qq^{\frac{m(m-1)}{2}}\frac{(q^{-N};q)_n}{(-pq^m;q)_m(-pq^{2m+1};q)_{n-m}} $\\
\hline
\begin{tabular}{cc}Affine $q$-Krawtchouk: \\$K_n(q^{-x};p,N;q)$ \end{tabular} & $(q^{-x};q)_n$ & $(pq,q^{-N};q)_n(-1)^m{n\brack m}_qq^{\frac{m(m-1)}{2}}$\\
\hline
\begin{tabular}{cc}Dual $q$-Krawtchouk: \\ $K_n(\lambda(x);c;N/q)$\end{tabular} &$(q^{-x},cq^{x-N};q)_n$& $(q^{-N};q)_n(-1)^m{n\brack m}_qq^{\frac{m(m-1)}{2}}$ \\
\hline
\begin{tabular}{cc}Continuous Big $q$-Hermite: \\$H_n(x;a/q)$\end{tabular} &$(ae^{i\theta},ae^{-i\theta};q)_n $ &$(-a)^m{n\brack m}_qq^{\frac{m(m-1)}{2}} $\\
\hline
\begin{tabular}{cc}Continuous $q$-Laguerre: \\$P_n^{(\alpha)}(x/q)$\end{tabular} &   \begin{tabular}{cc}$\left(q^{\frac{1}{2}\alpha+\frac{1}{4}}e^{i\theta};q\right)_n$\\
$\times\left(q^{\frac{1}{2}\alpha+\frac{1}{4}}e^{-i\theta};q\right)_n$\end{tabular} &$(-1)^m{n\brack m}_qq^{\frac{m(m-1)}{2}}(q;q)_m(q^{m+\alpha+1};q)_{n-m}$\\
\hline
Little $q$-Laguerre/Wall: $P_n(x;a/q)$  &$x^n $&$(aq;q)_n(-1)^m{n\brack m}_qq^{\frac{m(m-1)}{2}} $\\
\hline
\multirow{2}*{$q$-Laguerre: $L_n^{(\alpha)}(x;q) $} & \multirow{2}*{$x^n $} & $q^{-n(\alpha+m)}q^{-\frac{n(n-1)}{2}}q^{\frac{m(m-1)}{2}}$\\
&  &$\times(-1)^m(q;q)_m (q^{\alpha+m+1};q)_{n-m}{n\brack m}_q$\\
\hline
Alternative $q$-Charlier: $K_n(x;a,q)$ & $x^n $& ${n\brack m}_q\frac{(-1)^mq^{\frac{m(m-1)}{2}}}{(-aq^m;q)_m(-aq^{2m+1};q)_{n-m}} $\\
\hline
$q$-Charlier: $C_n(q^{-x},a;q)$  &$(q^{-x};q)_n$ &$(-1)^{m+n}a^n{n\brack m}_qq^{\frac{m(m-1)}{2}-nm} $\\
\hline
Al-Salam-Carlitz I: $U_n^{(a)}(x;q) $ &$x^n(x^{-1};q)_n $&$a^{n-m}{n\brack m}_q $\\
\hline
Al-Salam-Carlitz II: $V_n^{(a)}(x;q)$ &$(x;q)_n $&$(a)^{n-m}q^{\frac{n(n-1)}{2}}q^{(m-n)(m-1)}(-1)^n{n\brack m}_q$\\
\hline
Continuous $q$-Hermite: $H_n(x/q)$  &$e^{-2in\theta} $&$e^{-im\theta}q^{n(m-1)}(-1)^{n+m}q^{\frac{n(n-1)}{2}}q^{\frac{m(m-1)}{2}}{n\brack m}_q $\\
\hline
Stieltjes-Wigert: $S_n(x;q)$ & $x^n $&$(q;q)_m(-1)^mq^{-\frac{n(n-1)}{2}}q^{\frac{m(m-1)}{2}}{n\brack m}_q $\\
\hline
Discrete $q$-Hermite I: $h_n(x;q) $ &$x^n(x^{-1};q)_n $&$(-1)^{m+n}{n\brack m}_q $\\
\hline
Discrete $q$-Hermite II: $\tilde{h}_n(x;q) $ &$(ix;q)_n $&$(-i)^mq^{n(m-1)}q^{\frac{n(n-1)}{2}}{n\brack m}_q $\\
\hline
\end{longtable}}}
\end{center}

\begin{center}
\footnotesize
{\renewcommand{\arraystretch}{1.9} 
{\setlength{\tabcolsep}{0.2cm} 
\begin{longtable}{|c c|}\caption{\textsc{connection coefficients in $q$-Askey scheme }} \label{connectionq-askey} \\
\hline
 $\textbf{Polynomials sets$\{P_n\}_{n\geq0}\rightarrow \{Q_m\}_{m\geq0}$}$ & $\textbf{Connection coefficients $C_m(n)$}$\\
 \hline
 \endfirsthead
 \begin{tabular}{cc}Askey-Wilson : \\ $P_n(x,a,b,c,d/q)\rightarrow P_m(x,a,\beta,\gamma,\delta/q)$ \end{tabular}
 &\begin{tabular}{cc} $\frac{a^{m-n}q^{m(m-n)}}{(q;q)_{n-m}}\frac{(ab,ac,ad,q;q)_n(abcdq^{n-1};q)_m}{(ab,ac,ad,q;q)_m(a \beta\gamma\delta q^{m-1};q)_m} $
  \\ $\times\hypergeomq{5}{4}{q^{m-n},a\beta q^m,a\gamma q^m,a\delta q^m,abcdq^{n+m-1}}{abq^m,acq^m,adq^m,a\beta\gamma\delta q^{2m}}{q}{q}$\end{tabular}\\
 \hline
 \begin{tabular}{ccc} $q$-Racah :  \\ $R_n(\mu(x);\alpha,\beta,\gamma,\delta/q)\rightarrow R_m(\mu(x);a,b,c,d/q)$ \\$\textrm{ provided that}\  \gamma\delta=cd$  \end{tabular}
 &\begin{tabular}{cc} $\frac{q^{m(m-n)}(q;q)_n}{(q;q)_m(q;q)_{n-m}}\frac{(\alpha\beta q^{n+1},aq,bd q,cq;q)_m}{(\alpha q,\beta\delta q,\gamma q,abq^{m+1};q)_m}$\\
$\times\hypergeomq{5}{4}{q^{m-n},\alpha\beta q^{m+n+1},aq^{m+1},bd q^{m+1},cq^{m+1}}{\alpha q^{m+1},\beta\delta q^{m+1},abq^{2m+2},\gamma q^{m+1}}{q}{q}$ \end{tabular}\\
  \hline
  \begin{tabular}{cc} Continous dual $q$-Hahn: \\ $P_n(x;a;b;c/q)\rightarrow P_m(x;a,\beta;\gamma/ q)$\end{tabular} &
   \begin{tabular}{cc} $\frac{a^{m-n}(q;q)_nq^{m(m-n)}(abq^m;acq^m;q)_{n-m}}{(q;q)_{n-m}(q;q)_m}$\\ $\times\hypergeomq{3}{2}{q^{m-n},a\beta q^{m},a\gamma q^m}{abq^m,acq^m}{q}{q} $\end{tabular}\\
  \hline
   \begin{tabular}{cc} Continous $q$-Hahn: \\ $P_n(x;a,b,c,d;q)\rightarrow P_m(x;a,\beta,\gamma,\delta;q)$ \end{tabular}
   & \begin{tabular}{cc} $\frac{q^{m(m-n)}(q;q)_n(abe^{2i\phi},ac,ad;q)_n(abcdq^{n-1};q)_ma^{m-n}e^{i\phi(m-n)}}
   {(q;q)_{n-m}(q;q)_m(abe^{2i\phi},ac,ad;q)_m(a\beta\gamma\delta q^{m-1};q)_m}$ \\ $\times\hypergeomq{5}{4}{q^{m-n},abcdq^{m+n-1},a\beta e^{2i\phi}q^m,a\gamma q^m,a\delta q^m}{abe^{2i\phi}q^m,acq^m,adq^m,a\beta\gamma\delta c q^{2m}}{q}{q}$\end{tabular}\\
  \hline
\begin{tabular}{cc}   Big $q$-Jacobi:  \\ $P_n(x;a,b,c;q)\rightarrow P_m(x;\alpha,\beta,\gamma;q)$\end{tabular}
 & \begin{tabular}{cc}   $\frac{q^{m(m-n)}(q;q)_n(abq^{n+1},\alpha q,\gamma q;q)_m}{(q;q)_m(q;q)_{n-m}(aq,cq,\alpha\beta q^{m+1};q)_m}$\\ $\times\hypergeomq{4}{3}{q^{m-n},abq^{m+n+1},\alpha q^{m+1},\gamma q^{m+1}}{aq^{m+1},cq^{m+1},\alpha\beta q^{2m+2}}{q}{q} $\end{tabular}\\
  \hline
  \begin{tabular}{cc} $q$-Hahn: \\$Q_n(q^{-x};\alpha,\beta,N/q)\rightarrow Q_m(q^{-x};\alpha_1,\beta_1,N_1/q)$ \end{tabular}
    &\begin{tabular}{cc} $\frac{q^{m(m-n)}(\alpha\beta q^{n+1},q^{-N_1},\alpha_1q;q)_m(q;q)_n}{(q;q)_m(q;q)_{n-m}(\alpha q,q^{-N},\alpha_1\beta_1 q^{m+1};q)_m}$\\
   $\times \hypergeomq{4}{3}{q^{m-n},\alpha\beta q^{n+m+1},q^{m-N_1},\alpha_1 q^{1+m}}{\alpha q^{m+1},q^{m-N},\alpha_1\beta_1 q^{2m+2}}{q}{q} $\end{tabular}\\
  \hline
 \begin{tabular}{cc} Dual $q$-Hahn: \\ $R_n(\mu(x);\gamma,\delta,N/q)\rightarrow R_m(\mu(x);\gamma_1,\delta_1,N_1/q)$\end{tabular} &\begin{tabular}{cc} $\frac{q^{m(m-n)}(q;q)_n(\gamma_1q,q^{-N_1};q)_m}{(q;q)_m(q;q)_{n-m}(\gamma q,q^{-N};q)_m}$\\ $\times\hypergeomq{3}{2}{q^{m-n},\gamma_1q^{m+1},q^{m-N_1}}{\gamma q^{m+1},q^{m-N}}{q}{q}$\end{tabular}\\
  \hline
 \begin{tabular}{cc} Al-Salam Chihara: \\$Q_n(x;a,b/q)\rightarrow Q_m(x;a,\beta/q)$\end{tabular} & $\frac{(q;q)_n\beta^{n-m}\left(\frac{b}{\beta};q\right)_{n-m}}{(q;q)_m(q;q)_{n-m}}\frac{(abq^m;q)_{n-m}}{(a\beta q^m;q)_{n-m}(a\beta;q)_{n-m}}$\\
  \hline
  \begin{tabular}{ccc}Continuous $q$-Jacobi \\$P_n^{(\alpha,\beta)}(x/q)\rightarrow P_m^{(\alpha,\beta_1)}(x/q) $ \\ with $\nu=\alpha+\beta+1$, $\lambda=\alpha+\beta_1+1$\end{tabular}
    & \begin{tabular}{cc}$\frac{q^{m(m-n)}(q^{\alpha+1};q)_n(q^{\nu+n},-q^{1/2(\lambda)},-q^{-1/2(\lambda+1)};q)_m}
    {(q;q)_{n-m}(q^{\lambda+m};q)_m(q^{\alpha+1},-q^{1/2(\nu)},-q^{-1/2(\nu+1)};q)_m} $\\
   $\times\hypergeomq{5}{4}{q^{m-n},q^{m+n+\nu},-q^{\frac{1}{2}(\lambda)+m},-q^{\frac{1}{2}(\lambda+1)+m},-q^{\alpha+m+1}}
   {q^{m+\alpha+1},-q^{1/2(\nu)+m},-q^{-1/2(\nu+1)+m},q^{2m+\lambda+1}}{q}{q}$\end{tabular}\\
 \hline
  \begin{tabular}{cc} Big $q$-Laguerre \\ $P_n(x;a,b;q)\rightarrow P_m(x;\alpha,\beta;q)$\end{tabular} &\begin{tabular}{cc} $\frac{q^{m(m-n)}(q;q)_n(\alpha q,\beta q;q)_m}{(q;q)_m(q;q)_{n-m}(aq,bq;q)_m}$\\ $\times\hypergeomq{3}{2}{q^{m-n},\alpha q^{m+1},\beta q^{m+1}}{aq^{m+1},b q^{m+1}}{q}{q} $\end{tabular}\\
  \hline
   \begin{tabular}{cc}Little $q$-Jacobi\\ $P_n(x;a,b/q)\rightarrow P_m(x;\alpha,\beta/q)$\end{tabular}
    &\begin{tabular}{cc} $\frac{q^{m(m-n)}(q;q)_n(abq^{n+1},\alpha;q)_m}{(q;q)_m(q;q)_{n-m}(aq,\alpha\beta q^{m+1};q)_m}$\\ $\times\hypergeomq{3}{2}{q^{m-n},abq^{m+n+1},\alpha q^{m+1}}{aq^{m+1},\alpha\beta q^{2m+2}}{q}{q} $\end{tabular}\\
  \hline
    \begin{tabular}{cc}$q$-Meixner \\ $M_n(q^{-x},b,c;q)\rightarrow M_m(q^{-x},\beta,\gamma;q)$\end{tabular}
     &\begin{tabular}{cc}$\left(\frac{\gamma}{c}\right)^m \frac{(q;q)_n(\beta q;q)_m}{(q;q)_m(q;q)_{n-m}(bq;q)_m}$\\ $\times\hypergeomq{2}{1}{q^{m-n},\beta q^{m+1}}{bq^{m+1}}{q}{\frac{\gamma}{c}q^{n-m+1}} $\end{tabular}\\
  \hline
   Quantum $q$-Krawtchouk & $\frac{(q;q)_n(q^{-N_1};q)_m}{(q;q)_m(q;q)_{n-m}(q^{-N};q)_m}\left(\frac{p}{p_1}\right)^m$\\
    $K_n(q^{-x};p,-N;q)\rightarrow K_m(q^{-x};p_1,-N_1;q)$ & $\times\hypergeomq{2}{1}{q^{n-m},q^{m-N_1}}{q^{m-N}}{q}{\frac{p}{p_1}q^{n-m+1}}$\\
  \hline
    $q$-Krawtchouk  & $q^{m(m-n)}\frac{(q;q)_n}{(q;q)_m(q;q)_{n-m}}\frac{(-pq^n,q^{-N};q)_m}{(-p_1q^m,q^{-N_1};q)_m}$\\
 $K_n(q^{-x};p,N;q)\rightarrow K_m(q^{-x};p_1,N_1:q)$  & $\times \hypergeomq{3}{2}{q^{m-n},-pq^{m+n},q^{m-N_1}}{q^{m-N},-p_1q^{2m+1}}{q}{q} $\\
  \hline
   \begin{tabular}{cc} Little $q$-Laguerre/Wall\\ $P_n(x;a/q)\rightarrow P_m(x;\alpha/q)$\end{tabular}
   & $(\alpha q)^{n-m}\frac{(q;q)_n}{(q;q)_m(q;q)_{n-m}}\frac{(\alpha q;q)_m}{(aq;q)_n}\left(\frac{a}{\alpha};q\right)_{n-m} $\\
  \hline
   \begin{tabular}{cc}$q$-Laguerre \\ $L_n^{(\alpha)}(x;q)\rightarrow L_m^{(\beta)}(x;q) $\end{tabular}
  & \begin{tabular}{cc} $\frac{q^{m(\alpha-\beta)}(q^{\alpha+m+1};q)_{n-m}}{(q;q)_{n-m}}$\\
  $\times \hypergeomq{2}{1}{q^{m-n},q^{\beta+m+1}}{q^{\alpha+m+1}}{q}{q^{1+n-m+\alpha-\beta}}$\end{tabular}\\
  \hline
      \begin{tabular}{cc}Alternative $q$-Charlier\\ $K_n(x;a;q)\rightarrow P_m(x;\alpha;q)$\end{tabular}
      &\begin{tabular}{cc} $\frac{(q;q)_nq^{m(m-n)}(-aq^n;q)_m}{(q;q)_m(q;q)_{n-m}(-\alpha q^{m};q)_n}$\\
     $\times \frac{(1+\alpha q^{2m})}{(1+\alpha q^{m+n})}\left(\frac{\alpha}{a}q^{m-n+1};q\right)_{n-m}\left(-aq^n\right)^{n-m} $\end{tabular}\\
  \hline
    \begin{tabular}{cc}$q$-Charlier \\ $C_n(q^{-x},a;q)\rightarrow C_m(q^{-x},\alpha;q)$\end{tabular} & $\frac{(q;q)_n}{(q;q)_m(q;q)_{n-m}}\left(\frac{\alpha}{a}q;q\right)_{n-m}\left(\frac{\alpha}{a}\right)^m$\\
      \hline
     \begin{tabular}{cc}Al-Salam Carlitz I \\ $U_n^{(a)}(x;q)\rightarrow U_m^{(\alpha)}(x;q)$\end{tabular}
     &\begin{tabular}{cc} $\frac{(q;q)_n}{(q;q)_m(q;q)_{n-m}}(-a)^{n-m}$\\ $\times q^{\frac{n(n-1)}{2}+\frac{m(m-1)}{2}}\left(\frac{\alpha}{a}q^{m-n};q\right)_{n-m}$\end{tabular}\\
  \hline
     \begin{tabular}{cc}Al-Salam Carlitz II \\$V_n^{(a)}(x;q)\rightarrow V_m^{(\alpha)}(x;q)$\end{tabular}
      &\begin{tabular}{cc} $\frac{(q;q)_n}{(q;q)_m(q;q)_{n-m}}(-a)^{n-m}q^{\frac{1}{2}[m(m-1)-n(n-1)]}$ \\$\times\left(\frac{\alpha}{a}q^{2m-1};q\right)_{n-m} $\end{tabular}\\
  \hline
\end{longtable}}}
\end{center} 

\end{document}